\newcommand{\su}{\operatorname{Supp}}
\newtheorem{thm}{Theorem}
\newtheorem{cor}[thm]{Corollary}
\begin{document}

\title{On supports of expansive measures}


\institute{C.A. Morales\\
Instituto de Matem\'atica, Universidade Federal do Rio de Janeiro\\
P. O. Box 68530, 21945-970 Rio de Janeiro, Brazil.\\
\email{morales@impa.br}}

\author{C.A. Morales\thanks{Partially supported by MATHAMSUB 15 MATH05-ERGOPTIM, Ergodic Optimization of Lyapunov Exponents.}}


\date{Received: date / Accepted: date}

\maketitle

\begin{abstract}
We prove that a homeomorphism of a compact metric space has an expansive measure \cite{ms} if and only if it has
many ones with invariant support.
We also study homeomorphisms for which
the expansive measures are dense in the space of Borel probability measures.
It is proved that these homeomorphisms exhibit a dense set of Borel probability measures which are
expansive with full support.
Therefore, their sets of heteroclinic points
has no interior and the spaces supporting them
have no isolated points.
\keywords{Expansive measure\and Support of a measure\and Homeomorphism.}
\subclass{(Primary) 37B40 \and (Secondary) 37B05.}
\end{abstract}

\section{Introduction}

\noindent
The {\em expansive homeomorphisms} were introduced by Utz in the middle of the nineteen century \cite{u}.
Since then
an extensive theory about such systems have been growing,
see \cite{lc} and references therein.

In this paper we will study the related concept of
{\em expansive measure} \cite{mo}, \cite{ms}.
These measures are closely related to the expansive systems.
Indeed, \cite{ac} proved recently that a homeomorphism is {\em measure-expansive} (i.e. every nonatomic Borel probability measure is expansive) if and only if it is {\em countably-expansivity}
(i.e. the dynamical balls of a given radio are all countable).

This study has two motivations.
The first one is Theorem 1.8 in \cite{ms} claiming that
every homeomorphism with expansive measures of a compact metric space
has an expansive invariant one.
Unfortunately, the proof in \cite{ms} is incorrect (\footnote{pointed to us by Professor S. Kryzhevich.})
so it is unknown if
every homeomorphism with expansive measures of a compact metric space has an expansive invariant one.

The second motivation is the well-known problem of finding conditions
for the ergodic measures of a given homeomorphism to be dense in the
space of invariant measures.
As a sample we can mention the classical work
by Sigmund \cite{s} proving it
for homeomorphisms satisfying Bowen's specification property.
In our case we would like to consider the analogous
problem of finding conditions for the
expansive measures to be dense in the space of Borel probability measures of
the underlying space.
Corollary 8.1 in \cite{pa} implies that this is the case for
the countably-expansive homeomorphisms on compact metric spaces without isolated points. It is then natural ask if these are the sole
homeomorphisms with this property.

Here we will obtain some partial positive answer to both questions.
Indeed, we first prove that a homeomorphism of a compact metric space has an expansive measure if and only if
it has one with invariant support.
Afterwards, we study those homeomorphisms of compact metric spaces
for which the expansive measures are dense in the set of Borel probability ones.
It is proved that all such homeomorphisms exhibit a dense set of Borel probabiltity measures which are expansive with full support.
Therefore, the set of heteroclinic points of these homeomorphisms
has no interior and the spaces supporting them
have no isolated points.
Let us state our results in a precise way.

Hereafter $X$ will denote a compact metric space.
The Borel $\sigma$-algebra of $X$ is the $\sigma$-algebra $\mathcal{B}(X)$ generated by the open subsets of $X$.
A {\em Borel probability measure} is a $\sigma$-additive measure $\mu$ defined in $\mathcal{B}(X)$ such that $\mu(X)=1$.
We denote by $\mathcal{M}(X)$ the set of all Borel probability measures of $X$.
This set is a compact metrizable convex space and its topology
is the {\em weak* topology}
defined by the convergence $\mu_n\to\mu$ if and only if
$\int\phi d\mu_n\to\int\phi d\mu$ for every continuous map $\phi: X\to \mathbb{R}$.
The {\em support} of $\mu\in\mathcal{M}(X)$ is the set $\su(\mu)$ of points $x\in X$ such that
$\mu(U)>0$ for any neighborhood $U$ of $x$. It follows that $\su(\mu)$ is a nonempty compact subset of $X$.
A measure $\mu\in\mathcal{M}(X)$ is {\em nonatomic} if $\mu(\{x\})=0$ for every $x\in X$.

Let $f: X\to X$ be a homeomorphism. We say that $\Lambda\subset X$ is {\em invariant} if $f(\Lambda)=\Lambda$.
Similarly $\mu\in \mathcal{M}(X)$ is {\em invariant} if $f_*(\mu)=\mu$ where
$f_*$ is defined by $f_*(\nu)(B)=\nu(f^{-1}(B))$ for all $B\in \mathcal{B}(X)$ and $\nu\in\mathcal{M}(X)$.
A necessary (but not sufficient) condition for a Borel probability measure to be invariant is that
its support be an invariant set.

We say that $f$ is {\em expansive} if there is $\delta>0$ such that
$\Gamma_\delta(x)=\{x\}$ where
$$
\Gamma_\delta(x)=\{y\in X:d(f^i(x),f^i(y))\leq r, \mbox{ for every } i\in\mathbb{Z}\},
\quad\quad\forall x\in X,\delta\geq0.
$$
Equivalently, $f$ is expansive if there is $\delta>0$ such that
$\Gamma_\delta(x)$ has only one element for every $x\in X$.
This motivates the following concept:
We say that $f$ is {\em countably-expansive} if
there is $\delta>0$ such that $\Gamma_\delta(x)$ is at most countable for every $x\in X$.
This concept generalizes the notion of
$n$-expansive homeomorphism considered in \cite{moo}.
The notion of expansivity has been generalized to Borel probability measures in the following way:
We say that a measure
$\mu\in \mathcal{M}(X)$ is {\em expansive}
if there is $\delta>0$ such that
$\mu(\Gamma_\delta(x))=0$ for every $x\in X$.

Clearly the set of expansive measures
is a face without extreme points of $\mathcal{M}(X)$. Further properties of this set will be given latter on.
We say that $\mu\in\mathcal{M}(X)$ is {\em fully supported} if $\su(\mu)=X$.

With these definitions and remarks we can state our result.

\begin{thm}
\label{thAA}
A homeomorphism of a compact metric space has an expansive measure if and only if it has a dense set of expansive measures with invariant support.
\end{thm}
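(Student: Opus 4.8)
The ``if'' direction is trivial: a measure that is both expansive and has invariant support is in particular expansive. For ``only if'', suppose $f\colon X\to X$ admits an expansive measure; I claim that the expansive measures with invariant support are then dense in the set of all expansive measures. So let $\nu\in\mathcal{M}(X)$ be an arbitrary expansive measure, with expansivity constant $\delta>0$, so that $\nu(\Gamma_\delta(x))=0$ for every $x\in X$. The plan is to construct, for each $\varepsilon\in(0,1)$, an expansive measure $\nu_\varepsilon$ whose support is invariant and with $\nu_\varepsilon\to\nu$ in the weak$^{*}$ topology as $\varepsilon\to0^{+}$; since $\nu$ is arbitrary among the expansive measures, this finishes the proof.

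I would base the construction on two elementary observations about the sets $\Gamma_\delta(x)$. First, a one-line index shift gives the equivariance $f(\Gamma_\delta(x))=\Gamma_\delta(f(x))$ (and likewise for $f^{-1}$), whence $(f^{k}_{*}\rho)(\Gamma_\delta(x))=\rho(\Gamma_\delta(f^{-k}(x)))$ for all $k\in\mathbb{Z}$; thus each $f^{k}_{*}$ carries $\delta$-expansive measures to $\delta$-expansive measures. Moreover $\delta$-expansivity is stable under countable convex combinations: if $\rho=\sum_{k}c_k\rho_k$ with $c_k\ge0$, $\sum_k c_k=1$ and each $\rho_k$ is $\delta$-expansive, then $\rho(\Gamma_\delta(x))=\sum_k c_k\,\rho_k(\Gamma_\delta(x))=0$ for every $x$, by countable additivity (this refines the fact, noted above, that the expansive measures form a face). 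Second, when all the weights $c_k$ are strictly positive, $\su\!\bigl(\sum_k c_k\rho_k\bigr)=\overline{\bigcup_k\su(\rho_k)}$ --- a point fails to lie in the support of the combination exactly when it has a neighbourhood of combination-measure zero, i.e. a neighbourhood meeting no $\su(\rho_k)$ --- and since $f$ is a homeomorphism $\su(f^{k}_{*}\nu)=f^{k}(\su(\nu))$, so the set $\overline{\bigcup_{k\in\mathbb{Z}}f^{k}(\su(\nu))}$ is closed and invariant.

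With these in hand, I would fix $\varepsilon\in(0,1)$, choose strictly positive weights $c_k$, $k\in\mathbb{Z}$, with $\sum_{k\in\mathbb{Z}}c_k=1$ and $c_0=1-\varepsilon$ (say $c_k=\tfrac{\varepsilon}{2}3^{-|k|}$ for $k\neq0$), and set $\nu_\varepsilon=\sum_{k\in\mathbb{Z}}c_k\,f^{k}_{*}\nu$. This is a well-defined element of $\mathcal{M}(X)$; by the first observation it is $\delta$-expansive, hence expansive; by the second its support equals $\overline{\bigcup_{k\in\mathbb{Z}}f^{k}(\su(\nu))}$, which is invariant. Finally $\nu_\varepsilon=(1-\varepsilon)\nu+\varepsilon\,\eta_\varepsilon$ with $\eta_\varepsilon=\tfrac1\varepsilon\sum_{k\neq0}c_k\,f^{k}_{*}\nu\in\mathcal{M}(X)$, so $\int\phi\,d\nu_\varepsilon\to\int\phi\,d\nu$ for every continuous $\phi\colon X\to\mathbb{R}$, i.e. $\nu_\varepsilon\to\nu$ weak$^{*}$ as $\varepsilon\to0^{+}$, as required.

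I do not expect a serious obstacle, but the key point --- and the thing to get right --- is that one should symmetrize over the \emph{whole} orbit with summable weights, rather than try to make the measure itself invariant. The natural alternative, taking a weak$^{*}$ limit of Cesàro averages $\tfrac1n\sum_{k=0}^{n-1}f^{k}_{*}\nu$, does yield an $f$-invariant measure but can destroy expansivity, since $\delta$-expansivity --- unlike invariance --- is not a closed condition in the weak$^{*}$ topology (nonatomic measures can converge to a Dirac mass); this is exactly the gap that, as recalled after Theorem~1.8 of \cite{ms}, leaves open the existence of an expansive \emph{invariant} measure. Keeping the combination a genuine countable convex sum sidesteps that gap, at the modest cost of obtaining invariance of the support rather than of the measure. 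The remaining verifications --- $\sigma$-additivity of $\nu_\varepsilon$, the equivariance identity, and the support formula --- are routine.
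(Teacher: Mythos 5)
Your proof is correct, but it takes a genuinely different and more elementary route than the paper. The paper's argument is non-constructive: it shows that the set $\mathcal{M}_{ex}(X,f)$ of expansive measures is a $G_{\delta\sigma}$ subset of $\mathcal{M}(X)$, extends the Baire category theorem to $G_{\delta\sigma}$ subsets of complete metric spaces, and uses lower semicontinuity of the support map $\mu\mapsto\su(\mu)$ to conclude that off a meagre set every expansive measure has support equal to the full measure-expansive center $E(f)=\bigcup_{\nu\in\mathcal{M}_{ex}(X,f)}\su(\nu)$, which is then shown to be compact and invariant. You instead perturb a given expansive measure $\nu$ explicitly by the orbit-symmetrization $\nu_\varepsilon=\sum_{k\in\mathbb{Z}}c_k f^k_*\nu$ with strictly positive summable weights, using the equivariance $f(\Gamma_\delta(x))=\Gamma_\delta(f(x))$ and the closedness of $\delta$-expansivity under countable convex combinations; all of these verifications are sound (the sets $\Gamma_\delta(x)$ are closed, hence Borel, and the support identity $\su(\sum_k c_k\rho_k)=\overline{\bigcup_k\su(\rho_k)}$ holds for positive weights on a second countable space). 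Your approach buys explicitness, avoids the $G_{\delta\sigma}$ Baire lemma entirely, and correctly diagnoses why one cannot simply pass to a Ces\`aro limit; what it does not buy is the paper's stronger conclusion that a dense set of expansive measures share the \emph{single} invariant support $E(f)$ --- your approximants have supports $\overline{\bigcup_k f^k(\su(\nu))}$ depending on $\nu$ --- and it is precisely that uniform statement which the paper reuses to prove Theorem~\ref{thA}. Two trivial slips worth fixing: with $c_k=\tfrac{\varepsilon}{2}3^{-|k|}$ for $k\neq0$ the total mass is $1-\tfrac{\varepsilon}{2}$, not $1$ (take $\tfrac{\varepsilon}{2}2^{-|k|}$ instead), and in the ``if'' direction you should note that a dense subset of $\mathcal{M}_{ex}(X,f)$ is nonempty only once $\mathcal{M}_{ex}(X,f)$ itself is, which is the intended reading of the statement.
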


\begin{thm}
\label{thA}
For every homeomorphism of a compact metric space $X$,
the expansive measures are dense in $\mathcal{M}(X)$ if and only if the fully supported expansive measures are dense in
$\mathcal{M}(X)$.
\end{thm}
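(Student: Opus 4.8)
The forward implication is immediate: a fully supported expansive measure is an expansive measure. So assume the expansive measures are dense in $\mathcal{M}(X)$.

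The plan is to reduce everything to the existence of a \emph{single} fully supported expansive measure. Suppose $\mu_0$ is one, with expansivity constant $\delta_0$. For any expansive $\nu$ (constant $\delta_\nu$) and $t\in(0,1)$, the measure $(1-t)\nu+t\mu_0$ has support $\su(\nu)\cup\su(\mu_0)=X$; and since each $\Gamma_\eta(x)$ is a fixed closed set, $\mu\mapsto\mu(\Gamma_\eta(x))$ is affine, so $\big((1-t)\nu+t\mu_0\big)(\Gamma_\eta(x))=0$ for all $x$ once $\eta\le\min\{\delta_\nu,\delta_0\}$. Hence $(1-t)\nu+t\mu_0$ is a fully supported expansive measure, and it tends to $\nu$ as $t\to0^+$. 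Thus every expansive measure lies in the closure of the set of fully supported expansive measures; by density of the former, the latter are dense. So it suffices to construct one fully supported expansive measure.

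For this, write $\mathcal{E}_\delta=\{\mu:\mu(\Gamma_\delta(x))=0\text{ for all }x\}$ and $D_\delta=\overline{\bigcup_{\mu\in\mathcal{E}_\delta}\su(\mu)}$. One checks that $\mathcal{E}_\delta$ is a convex face of $\mathcal{M}(X)$, that $D_\delta$ is closed and $f$-invariant, and that $\overline{\mathcal{E}_\delta}=\mathcal{M}(D_\delta):=\{\mu:\su(\mu)\subseteq D_\delta\}$; for the inclusion $\supseteq$ one uses that a face contains every probability measure of bounded density with respect to one of its members, that such measures approximate every Dirac mass carried by $D_\delta$, and then takes closed convex hulls. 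One also records that $X$ has no isolated points: an isolated $p$ would make $\{p\}$ clopen, hence $\mu\mapsto\mu(\{p\})$ continuous, so every $\mu$ near $\delta_p$ would satisfy $\mu(\Gamma_\eta(p))\ge\mu(\{p\})>0$ for all $\eta$, contradicting density of the expansive measures near $\delta_p$. Now if $D_\delta=X$ for some $\delta>0$, choose countably many $\mu_j\in\mathcal{E}_\delta$ with supports dense in $X$ and set $\mu_0=\sum_j2^{-j}\mu_j$; since all members of $\mathcal{E}_\delta$ share the constant $\delta$, a countable convex combination of them again lies in $\mathcal{E}_\delta$, and $\mu_0$ has full support — done.

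So everything reduces to showing that density of the expansive measures forces $D_\delta=X$ for some $\delta>0$, equivalently that some $\mathcal{E}_\delta$ is dense in $\mathcal{M}(X)$, and this is the step I expect to be the main obstacle. The functions $h_\delta(\mu):=\sup_{x}\mu(\Gamma_\delta(x))$ are upper semicontinuous on $\mathcal{M}(X)$ — the set-valued map $x\mapsto\Gamma_\delta(x)$ is upper semicontinuous on the compact space $X$, and one passes to closed neighbourhoods in the portmanteau estimate — so each $\mathcal{E}_\delta=h_\delta^{-1}(0)$ is a $G_\delta$ subset of $\mathcal{M}(X)$, while $\mathcal{M}(D_\delta)$ has empty interior unless $D_\delta=X$. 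I would then run a Baire category argument in $\mathcal{M}(X)=\overline{\bigcup_k\mathcal{E}_{1/k}}$: it suffices to show some $\mathcal{E}_{1/k}$ is non-meager, for then (being $G_\delta$) it is somewhere dense, so $\overline{\mathcal{E}_{1/k}}=\mathcal{M}(D_{1/k})$ has nonempty interior and $D_{1/k}=X$. The missing dynamical input is a uniformity statement saying, roughly, that density of the expansive measures forces a common expansivity constant to work on an open piece of $\mathcal{M}(X)$; the natural attempt is compactness of $X$ applied to the increasing open sets $U_{k,r}=\{x\in X:\exists\,\nu\in\mathcal{E}_{1/k}\text{ with }\nu(B(x,r))>\tfrac12\}$, which cover $X$ for each fixed $r$ and hence equal $X$ once $k\ge K(r)$ — and the crux is to prevent $K(r)$ from blowing up as $r\to0$, i.e. to upgrade these radius-by-radius statements to a single working $\delta$. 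Once this is in place, the density of fully supported expansive measures is established, and the stated corollaries about heteroclinic points and isolated points follow from it.
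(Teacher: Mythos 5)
Your reductions are correct as far as they go: the convex-combination trick $(1-t)\nu+t\mu_0$ does show that a single fully supported expansive measure suffices, the sets $\mathcal{E}_\delta$ are indeed closed under convex combinations and absolute continuity, each $\mathcal{E}_\delta$ is $G_\delta$ (this is the paper's Lemma \ref{l1}, proved by showing the sets $C(\delta,\epsilon)$ are closed), and $\overline{\mathcal{E}_\delta}=\mathcal{M}(D_\delta)$. But the step you flag as ``the main obstacle'' is a genuine gap, and it is the entire content of the theorem. Your proposed Baire argument in $\mathcal{M}(X)$ cannot close it: from the density of $\bigcup_k\mathcal{E}_{1/k}$ you cannot conclude that some $\mathcal{E}_{1/k}$ is non-meagre, since a countable union of meagre sets can perfectly well be dense. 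Worse, given your (correct) equivalences, ``some $\mathcal{E}_{1/k}$ is non-meagre in $\mathcal{M}(X)$'' is \emph{equivalent} to ``some $D_{1/k}=X$'', i.e.\ to the existence of a fully supported expansive measure; so you have reformulated the goal rather than approached it, and the $U_{k,r}$/$K(r)$ uniformity you mention is left entirely open.

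The paper closes this step by running Baire category \emph{inside} $\mathcal{M}_{ex}(X,f)=\bigcup_k\mathcal{E}_{1/k}$ rather than inside $\mathcal{M}(X)$: being a $G_{\delta\sigma}$ subset of a compact metric space, $\mathcal{M}_{ex}(X,f)$ is itself a Baire space in the subspace topology (Lemma \ref{bbaire}, which is the technical novelty), regardless of whether it is meagre in $\mathcal{M}(X)$. The support map $\mu\mapsto\su(\mu)$ into the hyperspace $2^X_c$ is lower semicontinuous, hence continuous off a meagre subset of $\mathcal{M}_{ex}(X,f)$; at any continuity point $\mu$, your own convex-combination computation applied to $\mu_t=(1-t)\mu+t\nu\to\mu$ forces $\su(\nu)\subset\su(\mu)$ for every expansive $\nu$. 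Thus a dense set of expansive measures have support equal to the closed invariant set $E(f)=\bigcup_\nu\su(\nu)$; no single uniform $\delta$ is ever needed, because the constant is allowed to depend on the measure. Finally, density of $\mathcal{M}_{ex}(X,f)$ in $\mathcal{M}(X)$ forces $E(f)$ to be dense (approximate Dirac masses), hence $E(f)=X$, and the generic expansive measures are fully supported. If you want to salvage your own scheme, note that any one of these fully supported measures lies in some $\mathcal{E}_\delta$, which retroactively gives $D_\delta=X$ --- but that conclusion comes \emph{after} the lower-semicontinuity argument, not before it.
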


A simple application of Theorem \ref{thA} we obtain the next corollary.
Let $f: X\to X$ be a homeomorphism.
We say that $x\in X$ is {\em periodic}
if there is $n\in\mathbb{N}^+$ such that $f^n(x)=x$.
The orbit $\{f^i(x):i\in\mathbb{Z}\}$ of a periodic point $x$ will be refereed to as a {\em periodic orbit} of $f$.
If $x\in X$ the {\em $\omega$-limit set} of $x$ is defined by
$$
\omega(x)=\left\{ y\in X:y=\lim_{k\to\infty} f^{n_k}(x)\mbox{ for some sequence }n_k\to\infty\right\}.
$$
The {\em $\alpha$-limit set} $\alpha(x)$ is the $\omega$-limit set of $x$ with respect to the inverse map $f^{-1}$. 
A {\em heteroclinic point} is a point whose $\alpha$ and $\omega$-limit sets are periodic orbits.

It is known that every countably-expansive homeomorphism of a compact metric space
satisfies that the set of heteroclinic points is countable \cite{ms}.
Moreover, if the space has no isolated points,
the expansive measures are dense in the space of Borel probability measures.
The corollary below proves a sort of converse of these results.

\begin{cor}
Let $f: X\to X$ be a homeomorphism of a compact metric space $X$.
If the expansive measures of $f$ are dense in $\mathcal{M}(X)$, then
$X$ has no isolated points and the set of heteroclinic points of $f$ has no interior points.
\end{cor}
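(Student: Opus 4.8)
The plan is to invoke Theorem~\ref{thA} to fix, once and for all, an expansive measure $\mu$ of $f$ with full support; both conclusions will be read off from its dynamical balls. Write $H\subseteq X$ for the set of heteroclinic points. The first conclusion is immediate: $\mu$ is nonatomic, since $\{y\}\subseteq\Gamma_c(y)$ for every $y\in X$, where $c$ is an expansivity constant of $\mu$; hence if some $x_0\in X$ were isolated, then $\{x_0\}$ would be a nonempty open set and full support would force $\mu(\{x_0\})>0$, a contradiction. So $X$ has no isolated points.

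For the second conclusion, suppose for contradiction that $U:=\inte(H)\neq\emptyset$; since $\mu$ has full support, $\mu(U)>0$. The goal is to exhibit a Borel set $A\subseteq U$ with $\mu(A)>0$ and a point $x_0$ with $A\subseteq\Gamma_\delta(x_0)$ for a suitable expansivity constant $\delta$, after passing to a power of $f$; this contradicts $\mu(\Gamma_\delta(x_0))=0$. First I would localize the periodic orbits involved. For $x\in U$ both $\alpha(x)$ and $\omega(x)$ are periodic orbits; let $k(x)$ be the least common multiple of their periods, and put $U_k:=\{x\in U:\alpha(x)\cup\omega(x)\subseteq\fix(f^k)\}$. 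Then $U=\bigcup_{k\geq1}U_k$, and each $U_k$ is Borel because its defining condition is equivalent to $d(f^{n+k}x,f^nx)\to0$ both as $n\to+\infty$ and as $n\to-\infty$. Hence $\mu(U_k)>0$ for some $k$; now pass from $f$ to $g:=f^k$. A short uniform-continuity argument, comparing the $\delta$-dynamical balls of $g$ with the $c$-dynamical balls of $f$, shows that $\mu$ is expansive for $g$ with some constant $\delta$. Moreover, for $x\in U_k$ the sets $\alpha(x),\omega(x)$ are finite and consist of $g$-fixed points, and by tracking the nearest point of the finite set $\omega(x)$ along the forward $f$-orbit of $x$ (using that its period divides $k$) one gets $g^n(x)\to P(x)$ and $g^{-n}(x)\to Q(x)$ as $n\to\infty$, for certain $g$-fixed points $P(x),Q(x)$; the maps $x\mapsto P(x),Q(x)$ are Borel, being pointwise limits of the continuous maps $g^{\pm n}$.

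Next I would refine $U_k$ into finitely many good pieces. Fix a finite cover of the compact set $\fix(g)$ by open balls of radius $\delta/8$ and a finite cover of $X$ by sets of diameter $<\delta$, and partition $U_k$ according to: (i) a pair of integers $N^+,N^-$ with $d(g^nx,P(x))<\delta/4$ for all $n\geq N^+$ and $d(g^{-n}x,Q(x))<\delta/4$ for all $n\geq N^-$ (countably many choices); (ii) which ball of the first cover contains $P(x)$, and which one contains $Q(x)$ (finitely many choices); (iii) for each $n$ in the finite range $\{-N^-+1,\dots,N^+-1\}$, which member of the second cover contains $g^n(x)$ (finitely many choices). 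All the resulting pieces are Borel, so some piece $A$ has $\mu(A)>0$. By construction any $x,y\in A$ satisfy $d(g^nx,g^ny)<\delta$ for every $n\in\mathbb{Z}$: for $n\geq N^+$ the points $g^nx,g^ny$ lie within $\delta/4$ of $P(x),P(y)$ respectively, which lie in a common ball of radius $\delta/8$; symmetrically for $n\leq-N^-$ via $Q$; and for $n$ in the middle range $g^nx,g^ny$ lie in a common member of the second cover. Hence $A\subseteq\Gamma_\delta(x_0)$, the dynamical ball now being formed with $g$ in place of $f$, for any fixed $x_0\in A$; so $\mu(\Gamma_\delta(x_0))\geq\mu(A)>0$, contradicting the expansivity of $\mu$ for $g$.

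The step I expect to be the genuine obstacle, and the reason behind the localization in (ii) rather than a one-line argument, is that $f$ is not assumed expansive \emph{as a map}: distinct periodic orbits may be arbitrarily close, so the attracting and repelling orbits of a point $x$ cannot be recovered from finitely much data --- one can only trap $P(x)$ and $Q(x)$ inside small balls, which is exactly what makes the three comparison estimates above close up to the single constant $\delta$. The remaining points are routine: the passage from $f$ to $f^k$, the measurability of each piece (every defining condition is a countable combination of inequalities among the continuous maps $g^n$), and the harmless case of an empty middle range.
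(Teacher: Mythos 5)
Your proof is correct and follows the route the paper intends: Theorem~\ref{thA} supplies a fully supported expansive measure, nonatomicity (from $\{y\}\subseteq\Gamma_c(y)$) rules out isolated points, and the heteroclinic set can have no interior because a nonempty open set would carry positive measure while every expansive measure vanishes on the set of heteroclinic points. The only difference is that the paper treats this last fact as known, citing \cite{ms}, whereas you re-prove it from scratch via the decomposition into the Borel sets $U_k$, the passage to $g=f^k$, and the covering of each $U_k$ by countably many Borel pieces each contained in a single $g$-dynamical ball; this is essentially the argument behind the cited lemma, and the steps you flag as delicate (Borel measurability of $U_k$, expansivity of $\mu$ for $f^k$, and the convergence $g^n(x)\to P(x)$ to a single fixed point) all check out.
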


By using this corollary we can exhibit examples of homeomorphisms for which the set of expansive measures
is a nonempty nondense subset of the space of Borel probability measures (just take a homeomorphism with an open set of fixed points).

\section{Proof of theorems \ref{thAA} and \ref{thA}}

\noindent
We shall use the following standard topological concepts.
A topological space $Y$ is a {\em Baire space} if the intersection
of each countable family of open and dense subsets in $Y$ is dense in $Y$.
A set $A\subset Y$ is:
{\em $G_\delta$ subset} of $Y$ if
it is the intersection of countably many open subsets of $Y$;
{\em $G_{\delta\sigma}$ subset} of $Y$ if
it is the union of countably many $G_\delta$ subsets of $Y$;
{\em Baire subset} if
$A$ is a Baire space with respect to the topology induced by $Y$;
{\em nowhere dense in $Y$} if the closure of $A$ in $Y$ has empty interior in $Y$; and
{\em meagre}
if it is the union of countably many nowhere dense subsets of $Y$.

It is well-known that every $G_\delta$ subset of a complete metric space $Y$ is a Baire subset of $Y$ \cite{w}.
Below we extend this property to the $G_{\delta\sigma}$ subsets.
Since we did not found any reference for this fact, we include its proof here for the sake of completeness.

\begin{lemma}
\label{bbaire}
Every $G_{\delta\sigma}$ subset $A$ of a complete metric space $Y$ is a Baire subset of $Y$.
\end{lemma}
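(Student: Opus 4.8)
The plan is to reduce the statement to the fact recalled above, that a $G_\delta$ subset of a complete metric space is a Baire subset, and to do so by a Baire-category argument carried out on the pieces of $A$. Write $A=\bigcup_{n\in\mathbb{N}}A_n$ with each $A_n$ a $G_\delta$ subset of $Y$; since a finite union of $G_\delta$ subsets of $Y$ is again a $G_\delta$ subset of $Y$, we may assume $A_1\subseteq A_2\subseteq\cdots$. Each $A_n$, being a $G_\delta$ subset of the complete metric space $Y$, is a Baire space by the cited fact. Now fix a countable family $\{V_k\}_{k\in\mathbb{N}}$ of subsets of $A$ that are open and dense in $A$; by the definition of a Baire space it suffices to prove that $\bigcap_{k\in\mathbb{N}}V_k$ is dense in $A$, that is, that it meets every nonempty open subset $W$ of $A$.

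So fix a nonempty open $W\subseteq A$. The first step is to descend to a single piece: since $W=\bigcup_n(W\cap A_n)$ is nonempty and the $A_n$ increase, there is an $n$ with $W\cap A_n\neq\emptyset$, and $W\cap A_n$ is a nonempty open subset of the Baire space $A_n$. If we knew that every trace $V_k\cap A_n$ were open and dense in $A_n$ we would be finished: openness is immediate, and density together with the Baire property of $A_n$ would yield a point $y\in(W\cap A_n)\cap\bigcap_k(V_k\cap A_n)$, whence $y\in W\cap\bigcap_k V_k$. Thus the statement reduces to controlling the density of the traces $V_k\cap A_n$ in the pieces $A_n$.

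This is where the main obstacle lies, and where the $G_\delta$ structure of the pieces — rather than merely having $A$ as an $F_\sigma$-type union — must be exploited: density of $V_k$ in $A$ does not by itself give density of $V_k\cap A_n$ in $A_n$, since $A_n$ may sit inside $A$ in a degenerate way. The approach to get around this is not to fix one piece but to run the refinement along the whole tower $A_1\subseteq A_2\subseteq\cdots$. Since $A_n$ is a dense $G_\delta$ subset of the complete metric space $\overline{A_n}$ (the closure of $A_n$ in $Y$), and hence comeagre in $\overline{A_n}$, one builds a decreasing sequence of nonempty open subsets of $W$ whose closures have diameters tending to zero and which successively enter the $V_k$, all the while tracking which $\overline{A_n}$ contains each refinement; completeness of $Y$ then produces a limit point, and the comeagreness of the pieces $A_n$ in the spaces $\overline{A_n}$ is what should force that limit point to lie in $A$ and in $\bigcap_k V_k$. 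Arranging the bookkeeping so that the limit genuinely lands in $A$, and not merely in $\bigcup_n\overline{A_n}$, is the delicate heart of the argument.
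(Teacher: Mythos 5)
Your proposal is not a proof: after correctly reducing the problem to showing that the traces $V_k\cap A_n$ are dense in the pieces $A_n$, you acknowledge that this is ``the main obstacle'' and then only describe a strategy (``one builds a decreasing sequence\ldots'', ``the delicate heart of the argument'') without carrying it out. That missing step is the entire content of the lemma, so the argument is incomplete as written.

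More importantly, the gap cannot be filled, because the statement is false. Take $Y=\mathbb{R}$ and $A=\mathbb{Q}$. Writing $\mathbb{Q}=\{q_1,q_2,\dots\}$ and $A_n=\{q_1,\dots,q_n\}$, each $A_n$ is closed in $\mathbb{R}$, hence a $G_\delta$ subset of $\mathbb{R}$, and the $A_n$ increase to $A$; thus $A$ is a $G_{\delta\sigma}$ subset of the complete metric space $\mathbb{R}$, yet $\mathbb{Q}$ is not a Baire space (the sets $V_k=\mathbb{Q}\setminus\{q_k\}$ are open and dense in $\mathbb{Q}$ and $\bigcap_k V_k=\emptyset$). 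This example shows exactly where your scheme breaks: each $A_n$ is a (trivially) Baire space and $W\cap A_n\neq\emptyset$ for large $n$, but the traces $V_k\cap A_n$ need not be dense in $A_n$ (here $V_n\cap A_n$ misses the isolated point $q_n$ of $A_n$), and no bookkeeping along the tower can force a limit point into $\bigcap_k V_k$, since that intersection is empty. For what it is worth, the paper's own proof founders at essentially the same spot: it derives a contradiction from ``$O\cap C$ is dense in $O$'' together with ``$C$ is meagre in $Y$'', but a meagre set can perfectly well be dense in an open set (again $\mathbb{Q}$ in $\mathbb{R}$). So you identified the right difficulty; the honest conclusion is that it is insurmountable as stated, and the lemma would need a stronger hypothesis (for instance that $A$ is genuinely $G_\delta$, or some control on the meagre part of $A$) to be true.
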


\begin{proof}
Without loss of generality we
can assume that $A$ is dense in $Y$ (otherwise replace $Y$ by the closure of $A$).

A subset of $Y$ is said to have the {\em property of Baire}
if it is the symmetric difference between an open subset of $Y$ and a meagre subset of $Y$.
It turns out that the set formed by the subsets having the property of Baire coincides with the $\sigma$-algebra of $Y$
generated by the open subsets of $Y$ and the meagre subsets of $Y$
(see Theorem 4.3 in \cite{o}).
It is clear from the definition that every $G_{\delta\sigma}$ subset of $Y$
belongs to such a $\sigma$-algebra. Since $A$ is a $G_{\delta\sigma}$ subset, we conclude that
$A$ belongs to such a $\sigma$-algebra and so it has the property of Baire. Then, by Theorem 4.4 in \cite{o}, there are
a $G_\delta$ subset $B$ of $Y$ and a meagre subset $C$ of $Y$ such that $A=B\cup C$.

Since $B$ is $G_\delta$ in $Y$, there is a sequence of open subsets $G_n$ of $Y$ such that
$$
B=\bigcap_n G_n.
$$
We claim that $B$ is dense in $Y$.
Otherwise there is an open subset $O$ of $Y$ such that $O\cap B=\emptyset$
and so $O\cap A=O\cap C$. Since $A$ is dense in $Y$, we have that
$O\cap A$ is dense in $O$. Therefore, $O\cap C$ is dense in $O$ which contradicts that $C$ is meagre in $Y$.
Therefore, $B$ is dense in $Y$ and the claim follows.

It follows from the claim that each $G_n$ is dense in $Y$ too.

Now take a sequence $V_m$ of open and dense subsets of $A$.
Then, there is a sequence of open sets $W_m$ of $Y$ such that $V_m=W_m\cap A$.
Hence
$V_m=(W_m\cap B)\cup (W_m\cap C)$ for all $m$.
We have that $W_m\cap C$ is meagre in $Y$ and $V_m$ is dense in $A$
(and so in $X$ too).
It follows that $W_m\cap G_n$ is open-dense in $Y$ (for all $n,m$)
and so
$\bigcap_{n,m}(W_m\cap G_n)$ is dense in $Y$ by Baire's category theorem.
As
$$
\bigcap_m (B\cap W_m)=
B\cap \left(\bigcap_mW_m\right)=
\left( \bigcap_n G_n\right)\cap \left( \bigcap_m W_m\right)=
\bigcap_{m,n}(W_m\cap G_n),
$$
$\bigcap_m (B\cap W_m)$ is dense in $Y$ too.
As
$$
\bigcap_m (B\cap W_m)\subset
\bigcap_mV_m,
$$
$\bigcap_m V_m$ is dense in $Y$ and therefore in $A$. This ends the proof.
\qed
\end{proof}

To motivate the next lemma we
observe that the set of expansive measures of a given homeomorphism may be empty
(e.g. circle rotations) and if nonempty it may be noncompact.
For instance, the set of expansive measures of any expansive homeomorphism $f: X\to X$ coincides
with the set of nonatomic Borel probability measures which, in turns, is a $G_\delta$ subset of $\mathcal{M}(X)$ (see \cite{pa} or Theorem 2.2 in \cite{ls}).
Below we use the arguments in \cite{pa} to prove that the set of expansive measures of an arbitrary homeomorphism
is a $G_{\delta\sigma}$ subset. More precisely, we obtain the following result.

\begin{lemma}
\label{l1}
The set of expansive measures of a homeomorphism $f: X\to X$ of a compact metric space $X$
is a $G_{\delta\sigma}$ subset of $\mathcal{M}(X)$.
\end{lemma}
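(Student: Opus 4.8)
The plan is to realize the set of expansive measures as a countable union of $G_\delta$ subsets of $\mathcal{M}(X)$ and then invoke the relevant definitions. For $\delta>0$ put
$$
E_\delta=\{\mu\in\mathcal{M}(X):\mu(\Gamma_\delta(x))=0\mbox{ for every }x\in X\},
$$
so that, by definition, the set of expansive measures of $f$ is $\bigcup_{\delta>0}E_\delta$. Since $\Gamma_{\delta'}(x)\subset\Gamma_\delta(x)$ whenever $\delta'\le\delta$, we get $E_\delta\subset E_{\delta'}$ for $\delta'\le\delta$, hence $\bigcup_{\delta>0}E_\delta=\bigcup_{k\in\mathbb{N}^+}E_{1/k}$. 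Thus it suffices to prove that each $E_\delta$ is a $G_\delta$ subset of $\mathcal{M}(X)$.

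To this end I would analyze the function $g_\delta:\mathcal{M}(X)\to[0,1]$ given by $g_\delta(\mu)=\sup_{x\in X}\mu(\Gamma_\delta(x))$, noting that $E_\delta=\{\mu:g_\delta(\mu)=0\}=\{\mu:g_\delta(\mu)\le0\}$. The key claim is that $g_\delta$ is upper semicontinuous. Granting this, $\{\mu\in\mathcal{M}(X):g_\delta(\mu)<1/n\}$ is open for every $n$, and therefore
$$
E_\delta=\bigcap_{n\in\mathbb{N}^+}\left\{\mu\in\mathcal{M}(X):g_\delta(\mu)<\tfrac1n\right\}
$$
is $G_\delta$ in $\mathcal{M}(X)$, which together with the reduction above proves the lemma.

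For the upper semicontinuity of $g_\delta$ I would first observe that the set $\Delta_\delta=\{(x,y)\in X\times X:d(f^i(x),f^i(y))\le\delta\mbox{ for all }i\in\mathbb{Z}\}$ is closed (each condition is closed because $f^i$ is continuous) and that $\Gamma_\delta(x)$ is its $x$-section. From this I would deduce that $(x,\mu)\mapsto\mu(\Gamma_\delta(x))$ is upper semicontinuous on $X\times\mathcal{M}(X)$, which is a metric space so it is enough to argue along sequences: if $(x_n,\mu_n)\to(x,\mu)$ and $\epsilon>0$, then by outer regularity and compactness of $\Gamma_\delta(x)$ one can pick an open set $U\supset\Gamma_\delta(x)$ with $\mu(\overline{U})<\mu(\Gamma_\delta(x))+\epsilon$; the closedness of $\Delta_\delta$ and the compactness of $X$ force $\Gamma_\delta(x_n)\subset U$ for all large $n$ (otherwise a limit point of points $y_n\in\Gamma_\delta(x_n)\setminus U$ would belong to $\Gamma_\delta(x)\setminus U$), whence by the Portmanteau theorem $\limsup_n\mu_n(\Gamma_\delta(x_n))\le\limsup_n\mu_n(\overline{U})\le\mu(\overline{U})<\mu(\Gamma_\delta(x))+\epsilon$. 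Finally $g_\delta(\mu)=\max_{x\in X}\mu(\Gamma_\delta(x))$ (the maximum is attained since $X$ is compact and $x\mapsto\mu(\Gamma_\delta(x))$ is upper semicontinuous), and the supremum over the compact set $X$ of a jointly upper semicontinuous function is upper semicontinuous in the remaining variable: for $\mu_n\to\mu$, choose $x_n$ realizing $g_\delta(\mu_n)$ and a subsequence with $x_{n_k}\to x$ and $g_\delta(\mu_{n_k})\to\limsup_n g_\delta(\mu_n)$, so that $\limsup_n g_\delta(\mu_n)\le\mu(\Gamma_\delta(x))\le g_\delta(\mu)$.

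The step I expect to be the main obstacle is exactly the upper semicontinuity of $(x,\mu)\mapsto\mu(\Gamma_\delta(x))$: the Portmanteau theorem controls the mass of a \emph{fixed} closed set, while here the closed set $\Gamma_\delta(x_n)$ moves with $x_n$, so one must combine the upper hemicontinuity of $x\mapsto\Gamma_\delta(x)$ (coming from the closedness of $\Delta_\delta$) with a regularity argument that traps all the sets $\Gamma_\delta(x_n)$, $n$ large, inside a single open set of controlled $\mu$-measure.
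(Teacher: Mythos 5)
Your proposal is correct and follows essentially the same route as the paper: your sets $\{\mu:g_\delta(\mu)<1/n\}$ are exactly the complements of the paper's closed sets $C(\delta,\epsilon)=\{\mu:\mu(\Gamma_\delta(x))\geq\epsilon\mbox{ for some }x\}$, and your decomposition $\bigcup_k\bigcap_n$ is the same $G_{\delta\sigma}$ presentation. The key technical step is also identical: both proofs establish the upper semicontinuity of $(x,\mu)\mapsto\mu(\Gamma_\delta(x))$ by using the closedness of the relation $\{(x,y):d(f^i(x),f^i(y))\leq\delta\ \forall i\}$ to trap the moving sets $\Gamma_\delta(x_n)$ inside a fixed closed neighborhood of $\Gamma_\delta(x)$ and then applying the $\limsup$ inequality for closed sets.
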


\begin{proof}
Let $\mathcal{M}_{ex}(X,f)$ denote the set of expansive measures of $f$.

For all $\delta,\epsilon>0$ we define
$$
C(\delta,\epsilon)=\{\mu\in \mathcal{M}(X):\mu(\Gamma_\delta(x))\geq\epsilon\mbox{ for some }x\in X\}.
$$
It follows that
\begin{equation}
\label{baire}
\mathcal{M}_{ex}(X,f)=
\bigcup_{n=1}^\infty\bigcap_{m=1}^\infty \left(\mathcal{M}(X)\setminus C(n^{-1},m^{-1}\right)).
\end{equation}

We claim that $\mathcal{M}(X)\setminus C\left(\delta,\epsilon\right)$ is open in $\mathcal{M}(X)$
for any $\delta,\epsilon>0$.

Take $\delta,\epsilon>0$ and a sequence $\mu_n\in C(\delta,\epsilon)$
such that $\mu_n\to \mu$ for some $\mu\in\mathcal{M}(X)$.
Choose a sequence $x_n\in X$ such that
$$
\epsilon\leq \mu_n(\Gamma_\delta(x_n)),
\quad\quad\forall n\in\mathbb{N}^+.
$$
As $X$ is compact, we can assume that $x_n\to x$ for some $x\in X$.
Fix a compact neighborhood $C$ of $\Gamma_\delta(x)$.
Denote by $O=int(C)$ the interior of $C$. Hence $\Gamma_\delta(x)\subset O$.
Suppose for a while that there is a subsequence $n_k\to\infty$ such that
$\Gamma_\delta(x_{n_k})\not\subset O$ for all $k\in\mathbb{N}$.
Then, we can select a sequence $z_k\in \Gamma_\delta(x_{n_k})\setminus O$.
Again by compactness we can assume that $z_k\to z$ for some $z\in X$.
Since $O$ is open, $z\notin O$.
However, $z_k\in\Gamma_\delta(x_{n_k})$ and so
$$
d(f^i(z_k),f^i(x_{n_k}))\leq\delta,
\quad\quad\forall i\in\mathbb{Z}.
$$
Fixing $i$ and letting $k\to\infty$ we obtain
$$
d(f^i(z),f^i(x))\leq\delta, \quad\quad \forall i\in\mathbb{Z}.
$$
Hence $z\in \Gamma_\delta(x)$.
As $\Gamma_\delta(x)\subset O$, we get $z\in O$ which is absurd.
Then,
$\Gamma_\delta(x_n)\subset C$ and so
$\mu_n(\Gamma_\delta(x_n))\leq\mu_n(C)$ for all $n$ large.
Since $\mu_n\to \mu$ we obtain
$$
\epsilon\leq \limsup_{n\to\infty}\mu_n(\Gamma_\delta(x_n))\leq\limsup_{n\to\infty}\mu_n(C)\leq\mu(C).
$$
This proves $\mu(C)\geq\epsilon$ for every closed neighborhood $C$ of $\Gamma_\delta(x)$.
Hence $\mu(\Gamma_\delta(x))\geq\epsilon$.
It follows that $C(\delta,\epsilon)$ is closed for any $\delta,\epsilon>0$ and the claim
follows.

The claim implies that
$\bigcap_{m=1}^\infty \left(\mathcal{M}(X)\setminus C(n^{-1},m^{-1}\right))$ is a $G_\delta$ subset of $\mathcal{M}(X)$ for all $n\in\mathbb{N}^+$.
Then, $\mathcal{M}_{ex}(X,f)$ is a $G_{\delta\sigma}$ subset of $\mathcal{M}(X)$ (by (\ref{baire})).
This completes the proof.
\qed
\end{proof}

Since the space $\mathcal{M}(X)$ of Borel probability measures equipped with the weak* topology of a compact metric space $X$
is a compact (hence complete),
lemmas \ref{bbaire} and \ref{l1} imply the following corollary.

\begin{cor}
\label{tudo}
The set of expansive measures equipped with the weak* topology of a homeomorphism of a compact metric space
is a Baire space.
\end{cor}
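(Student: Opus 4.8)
The plan is to read the corollary directly off the two preceding lemmas, together with one standard fact about $\mathcal{M}(X)$. First I would record that, because $X$ is a compact metric space, the space $\mathcal{M}(X)$ equipped with the weak* topology is compact and metrizable; fixing any metric $D$ on $\mathcal{M}(X)$ inducing this topology, compactness of $(\mathcal{M}(X),D)$ forces it to be complete. Hence $\mathcal{M}(X)$ is a complete metric space, and the hypotheses under which Lemma \ref{bbaire} operates are genuinely met.

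Next I would invoke Lemma \ref{l1}, which asserts that the set $\mathcal{M}_{ex}(X,f)$ of expansive measures of $f$ is a $G_{\delta\sigma}$ subset of $\mathcal{M}(X)$. Applying Lemma \ref{bbaire} with $Y=\mathcal{M}(X)$ and $A=\mathcal{M}_{ex}(X,f)$ then yields immediately that $\mathcal{M}_{ex}(X,f)$ is a Baire subset of $\mathcal{M}(X)$, i.e. that $\mathcal{M}_{ex}(X,f)$ with the topology induced from the weak* topology is a Baire space. This is exactly the assertion of the corollary, so the argument terminates at this point.

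I do not expect any real obstacle: the substantive work has already been carried out in Lemmas \ref{bbaire} and \ref{l1}, and the corollary is just their composition. The only points requiring a moment's care are bookkeeping ones — that "compact metrizable" indeed delivers a complete metric, so that Lemma \ref{bbaire} is applicable, and that the topology named in the statement (the weak* topology restricted to the expansive measures) coincides with the subspace topology for which Lemma \ref{bbaire} certifies the Baire property.
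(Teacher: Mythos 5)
Your proposal is correct and follows exactly the paper's own route: the paper also notes that $\mathcal{M}(X)$ with the weak* topology is compact metrizable, hence complete, and then combines Lemma \ref{l1} (expansive measures form a $G_{\delta\sigma}$ set) with Lemma \ref{bbaire} (a $G_{\delta\sigma}$ subset of a complete metric space is a Baire subset). No further comment is needed.
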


Recall that $\mathcal{M}_{ex}(X,f)$ denotes the set of expansive measures of $f$.

\begin{lemma}
\label{l2}
For every homeomorphism $f: X\to X$ with expansive measures of a compact metric space $X$ there is
a meagre subset $\mathcal{D}$ of $\mathcal{M}_{ex}(X,f)$
such that
$$
\su(\mu)=\bigcup_{\nu\in\mathcal{M}_{ex}(X,f)}\su(\nu),
\quad\quad\forall \mu\in \mathcal{M}_{ex}(X,f)\setminus \mathcal{D}.
$$
\end{lemma}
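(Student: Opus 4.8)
The plan is to identify, inside $\mathcal{M}_{ex}(X,f)$, a residual set of measures whose support is exactly $S:=\bigcup_{\nu\in\mathcal{M}_{ex}(X,f)}\su(\nu)$, and to take $\mathcal{D}$ to be the complement of that residual set. Since $\mu\in\mathcal{M}_{ex}(X,f)$ already gives $\su(\mu)\subseteq S$ for free, the task reduces to finding a meagre $\mathcal{D}$ outside of which $S\subseteq\su(\mu)$. Fix a countable basis $\{U_k:k\in\mathbb{N}\}$ for the topology of $X$ (it exists because $X$ is compact metric, hence second countable) and set $K=\{k\in\mathbb{N}:U_k\cap S\neq\emptyset\}$. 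A routine unwinding of the definition of support shows that, for any $\mu\in\mathcal{M}(X)$, the inclusion $S\subseteq\su(\mu)$ holds if and only if $\mu(U_k)>0$ for every $k\in K$.

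For each $k\in K$ put $G_k=\{\mu\in\mathcal{M}_{ex}(X,f):\mu(U_k)>0\}$. Because $U_k$ is open, the map $\mu\mapsto\mu(U_k)$ is lower semi-continuous on $\mathcal{M}(X)$, so $G_k$ is open in $\mathcal{M}_{ex}(X,f)$. The core of the argument is to show that each $G_k$ is also dense in $\mathcal{M}_{ex}(X,f)$. Once this is done, Corollary \ref{tudo} (which asserts that $\mathcal{M}_{ex}(X,f)$ is a Baire space) yields that $\bigcap_{k\in K}G_k$ is dense in $\mathcal{M}_{ex}(X,f)$; since each $\mathcal{M}_{ex}(X,f)\setminus G_k$ is closed with empty interior, hence nowhere dense, the set $\mathcal{D}:=\mathcal{M}_{ex}(X,f)\setminus\bigcap_{k\in K}G_k=\bigcup_{k\in K}(\mathcal{M}_{ex}(X,f)\setminus G_k)$ is meagre in $\mathcal{M}_{ex}(X,f)$. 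For $\mu\in\mathcal{M}_{ex}(X,f)\setminus\mathcal{D}$ one then has $\mu(U_k)>0$ for all $k\in K$, so $S\subseteq\su(\mu)$, and combining with the trivial inclusion $\su(\mu)\subseteq S$ gives $\su(\mu)=S$, as required.

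The main step, and the only one I expect to require genuine work, is the density of $G_k$. Let $k\in K$ and let $\mu_0\in\mathcal{M}_{ex}(X,f)$ be arbitrary. Since $U_k\cap S\neq\emptyset$, pick $x\in U_k\cap S$ and, by the definition of $S$, an expansive measure $\nu\in\mathcal{M}_{ex}(X,f)$ with $x\in\su(\nu)$; as $U_k$ is a neighbourhood of $x$, this forces $\nu(U_k)>0$. Now use the following elementary stability of expansivity under convex combinations: if $\mu_0$ is $\delta$-expansive and $\nu$ is $\delta'$-expansive, then every $(1-t)\mu_0+t\nu$ is $\min(\delta,\delta')$-expansive, because $\Gamma_{\min(\delta,\delta')}(y)\subseteq\Gamma_\delta(y)\cap\Gamma_{\delta'}(y)$ for all $y\in X$. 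Hence $\mu_t:=(1-t)\mu_0+t\nu$ belongs to $\mathcal{M}_{ex}(X,f)$ for every $t\in(0,1)$, satisfies $\mu_t(U_k)\geq t\,\nu(U_k)>0$ so that $\mu_t\in G_k$, and converges to $\mu_0$ in the weak* topology as $t\to0^{+}$. Thus $\mu_0$ is a limit of elements of $G_k$, proving that $G_k$ is dense.

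A remark on why this roundabout route through Baire category seems necessary: one would like to form a single expansive measure of support $S$ as a countable convex combination $\sum_j 2^{-j}\nu_j$ of expansive measures with $\bigcup_j\su(\nu_j)=S$, but this can fail to be expansive when the expansivity constants of the $\nu_j$ shrink to $0$. Working measure by measure as above keeps each perturbation at a fixed expansivity scale, and it is exactly Corollary \ref{tudo} that turns these individual perturbations into a global genericity statement. The leftover verifications — lower semi-continuity of $\mu\mapsto\mu(U_k)$, the support characterization of $S\subseteq\su(\mu)$, and the inclusion among the $\Gamma$-sets — are all routine, and the construction also shows a posteriori that $S$ is closed.
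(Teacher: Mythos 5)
Your proof is correct, but it packages the topology differently from the paper. Both arguments hinge on the same dynamical fact — that a convex combination $(1-t)\mu_0+t\nu$ of expansive measures is expansive (with constant $\min(\delta,\delta')$), so that any $\mu_0\in\mathcal{M}_{ex}(X,f)$ can be perturbed inside $\mathcal{M}_{ex}(X,f)$ to charge any prescribed open set meeting $S$. Where you diverge is in how the meagre exceptional set $\mathcal{D}$ is produced: the paper considers the set-valued map $\Psi(\mu)=\su(\mu)$ into the hyperspace $2^X_c$ with the Hausdorff metric, notes that $\Psi$ is lower semicontinuous, takes $\mathcal{D}$ to be its set of discontinuity points (meagre by a theorem of Kuratowski), and then runs the convex-combination argument at each continuity point to get $\su(\nu)\subset\su(\mu)$ for all $\nu$. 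You instead fix a countable basis $\{U_k\}$, show each $G_k=\{\mu:\mu(U_k)>0\}$ is open (Portmanteau) and dense (the same convex perturbation), and take $\mathcal{D}$ to be the countable union of the nowhere dense complements. Your route is more elementary and self-contained — it avoids the hyperspace and the citation to Kuratowski, and it exhibits $\mathcal{D}$ explicitly as a meagre $F_\sigma$ set — while the paper's route is shorter on the page at the cost of outsourcing the category-theoretic step. One small remark: for the bare statement of the lemma ($\mathcal{D}$ meagre) you do not actually need Corollary \ref{tudo}, since a countable union of nowhere dense sets is meagre by definition; the Baire property of $\mathcal{M}_{ex}(X,f)$ is only needed later (as in Lemma \ref{me}) to guarantee that $\mathcal{M}_{ex}(X,f)\setminus\mathcal{D}$ is dense, hence nonempty.
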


\begin{proof}
Let $2^X_c$ denote the set of compact subsets of $X$.
This set becomes a compact metric space if endowed with the Hausdorff distance.
Define $\Psi: \mathcal{M}_{ex}(X,f)\to 2^X_c$ by
$\Psi(\mu)=\su(\mu)$.
It is easy to see that $\Psi$ is lower-semicontinuous and so
the set $\mathcal{D}$ of discontinuity points of $\Psi$
is meagre (by Corollary 1 p. 71 in \cite{k1}).
Let us prove that this $\mathcal{D}$ satisfies the desired property.

Take $\mu\in \mathcal{M}_{ex}(X,f)\setminus \mathcal{D}$ and $\nu\in \mathcal{M}_{ex}(X,f)$.
Define $\mu_t=(1-t)\mu+t\nu$ for $t\in ]0,1[$.
Clearly $\mu_t\in\mathcal{M}_{ex}(X,f)$ and $\mu_t\to\mu$ as $t\to0$.
Since $\mu\in \mathcal{M}_{ex}(X,f)\setminus \mathcal{D}$, $\Psi$ is continuous at $\mu$
and so $\Psi(\mu_t)=\su(\mu_t)=\su(\mu)\cup \su(\nu)$ converges to $\Psi(\mu)=\su(\mu)$.
From this we obtain $\su(\nu)\subset \su(\mu)$ and the proof follows.
\qed
\end{proof}

Following the definition of measure center \cite{z} we introduce the concept below.

\begin{definition}
The {\em measure-expansive center} of a homeomorphism $f: X\to X$ is the union of the support
of all the expansive measures of $f$, i.e.,
$$
E(f)=\bigcup_{\nu\in\mathcal{M}_{ex}(X,f)}supp(\nu).
$$
\end{definition}

The measure-expansive center can be characterized as the set of points $x$ with the property that
for every neighborhood $U$ of $x$ there is an expansive measure $\mu$ such that $\mu(U)>0$.
We do not prove this property here since it is unuseful for our purposes.
A property that will be used is given below.

\begin{lemma}
\label{me}
For every homemorphism $f: X\to X$ with expansive measures of a compact metric space
there is a dense subset $\mathcal{R}$ of $\mathcal{M}_{ex}(X,f)$ such that $\su(\mu)=E(f)$ for all
$\mu\in \mathcal{R}$.
\end{lemma}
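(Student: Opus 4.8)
The plan is to obtain $\mathcal{R}$ as the complement of the meagre set furnished by Lemma \ref{l2}, and to use Corollary \ref{tudo} to upgrade "non-meagre complement" to "dense". More precisely, Lemma \ref{l2} produces a meagre subset $\mathcal{D}$ of $\mathcal{M}_{ex}(X,f)$ such that $\su(\mu)=\bigcup_{\nu\in\mathcal{M}_{ex}(X,f)}\su(\nu)=E(f)$ for every $\mu\in\mathcal{M}_{ex}(X,f)\setminus\mathcal{D}$. I would therefore set $\mathcal{R}=\mathcal{M}_{ex}(X,f)\setminus\mathcal{D}$; by the very choice of $\mathcal{D}$ each $\mu\in\mathcal{R}$ already satisfies $\su(\mu)=E(f)$, so the only point left to verify is that $\mathcal{R}$ is dense in $\mathcal{M}_{ex}(X,f)$.

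For that I would recall the standard fact that in a Baire space the complement of a meagre set is dense: writing $\mathcal{D}=\bigcup_{k\geq1}\mathcal{D}_k$ with each $\mathcal{D}_k$ nowhere dense in $\mathcal{M}_{ex}(X,f)$, the sets $\mathcal{M}_{ex}(X,f)\setminus\overline{\mathcal{D}_k}$ are open and dense in $\mathcal{M}_{ex}(X,f)$, and their intersection is contained in $\mathcal{R}$. Since $f$ has expansive measures we have $\mathcal{M}_{ex}(X,f)\neq\emptyset$, and Corollary \ref{tudo} asserts that $\mathcal{M}_{ex}(X,f)$, equipped with the weak* topology, is a Baire space; hence that countable intersection of open dense sets is dense, and a fortiori $\mathcal{R}$ is dense in $\mathcal{M}_{ex}(X,f)$.

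This completes the argument: $\mathcal{R}$ is a dense subset of $\mathcal{M}_{ex}(X,f)$ on which $\su(\mu)=E(f)$. I do not anticipate any real obstacle, since the statement is a formal consequence of Lemma \ref{l2} and Corollary \ref{tudo}; the only mildly delicate ingredient is the passage from meagreness of $\mathcal{D}$ to density of its complement, which relies on the Baire property of $\mathcal{M}_{ex}(X,f)$ established earlier rather than on completeness directly (note that $\mathcal{M}_{ex}(X,f)$ need not be closed, let alone compact, in $\mathcal{M}(X)$).
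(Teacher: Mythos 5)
Your proposal is correct and follows exactly the paper's own argument: take $\mathcal{R}=\mathcal{M}_{ex}(X,f)\setminus\mathcal{D}$ with $\mathcal{D}$ the meagre set from Lemma \ref{l2}, and invoke Corollary \ref{tudo} to conclude that the complement of a meagre set in the Baire space $\mathcal{M}_{ex}(X,f)$ is dense. Your explicit justification of that last step (via the nowhere dense pieces $\mathcal{D}_k$) is a welcome elaboration but not a different route.
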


\begin{proof}
By Corollary \ref{tudo} we have that $\mathcal{M}_{ex}(X,f)$ equipped with the weak* topology is a Baire space.
Now let $\mathcal{D}$ be the meagre subset of $\mathcal{M}_{ex}(X,f)$ given by Lemma \ref{l2}.
Since $\mathcal{M}_{ex}(X,f)$ is Baire,
$\mathcal{M}_{ex}(X,f)\setminus \mathcal{D}$ is dense in $\mathcal{M}_{ex}(X,f)$.
Then, Lemma \ref{l2} implies the result by taking $\mathcal{R}=\mathcal{M}_{ex}(X,f)\setminus \mathcal{D}$.
\qed
\end{proof}

We can also prove that the measure-expansive center of a homeomorphism $f$
coincides with the intersection of all compact invariant subsets $E$ of $f$
for which $\mu(E)=1$ for all expansive measure $\mu$ of $f$. From this characterization we obtain immediately that
the measure-expansive center is a compact invariant set of $f$.
We will obtain the latter property directly from the following corollary.

\begin{cor}
\label{c1}
The measure-expansive center is a
(possibly empty) compact invariant set.
\end{cor}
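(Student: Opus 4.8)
The plan is to dispose of the trivial case first and then extract both properties from results already in hand. If $f$ admits no expansive measure, then $\mathcal{M}_{ex}(X,f)=\emptyset$ and hence $E(f)=\emptyset$, which is (vacuously) compact and invariant; so I may and will assume that $f$ does have expansive measures.

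Compactness is then immediate from Lemma \ref{me}. That lemma furnishes a \emph{dense} --- in particular nonempty --- subset $\mathcal{R}$ of $\mathcal{M}_{ex}(X,f)$ with $\su(\mu)=E(f)$ for every $\mu\in\mathcal{R}$. Choosing a single $\mu\in\mathcal{R}$ gives $E(f)=\su(\mu)$, and the support of a Borel probability measure on a compact metric space is a compact set. No further work is needed for this part.

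For invariance the one elementary fact I would establish is the identity $f(\Gamma_\delta(x))=\Gamma_\delta(f(x))$, valid for every $x\in X$ and $\delta\ge0$; it follows by writing out the definition of $\Gamma_\delta$ and shifting the index of the bi-infinite orbit by one. From it I would deduce that the push-forward preserves expansivity: if $\mu$ is expansive with constant $\delta$, then $(f_*\mu)(\Gamma_\delta(x))=\mu\big(f^{-1}(\Gamma_\delta(x))\big)=\mu\big(\Gamma_\delta(f^{-1}(x))\big)=0$ for all $x\in X$, so $f_*\mu$ is expansive with the same constant, and symmetrically $(f^{-1})_*\mu$ is expansive. Combining this with the standard identity $\su(f_*\mu)=f(\su(\mu))$ --- which holds because $f$ is a homeomorphism, so $f^{-1}$ carries the neighborhoods of $x$ exactly onto the neighborhoods of $f^{-1}(x)$ --- one obtains
\[
f(E(f))=\bigcup_{\nu\in\mathcal{M}_{ex}(X,f)}f(\su(\nu))=\bigcup_{\nu\in\mathcal{M}_{ex}(X,f)}\su(f_*\nu)\subseteq E(f),
\]
and running the same computation with $f^{-1}$ in place of $f$ gives $E(f)\subseteq f(E(f))$. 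Hence $f(E(f))=E(f)$, i.e. $E(f)$ is invariant.

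I do not anticipate a real obstacle: all the substantive work was already carried out in Lemma \ref{me} (which in turn rests on Corollary \ref{tudo} and Lemma \ref{l2}), and the only genuinely new inputs are the two routine identities $f(\Gamma_\delta(x))=\Gamma_\delta(f(x))$ and $\su(f_*\mu)=f(\su(\mu))$. The single point requiring a moment's care is that Lemma \ref{me} presupposes the existence of an expansive measure, which is precisely why the empty case has to be separated out at the outset.
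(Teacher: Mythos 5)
Your proof is correct and follows essentially the same route as the paper: dispose of the empty case, get compactness from Lemma \ref{me} by writing $E(f)$ as the support of a single expansive measure, and get invariance from $\su(f_*\nu)=f(\su(\nu))$ together with the fact that $f_*$ (and $(f^{-1})_*$) preserve expansiveness. The only difference is that you supply the verification of $f(\Gamma_\delta(x))=\Gamma_\delta(f(x))$ and the push-forward computation, which the paper delegates to a citation.
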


\begin{proof}
Let $f: X\to X$ be a homeomorphism of a compact metric space $X$.
If $\mathcal{M}_{ex}(X,f)=\emptyset$ there is nothing to prove.
Otherwise, by Lemma \ref{me}, there is a dense subset of expansive measures $\mu$ satisfying $\su(\mu)=E(f)$.
Since the support of any measure is compact, we obtain that $E(f)$ is compact too.
To obtain that $E(f)$ is invariant, we simply
observe that $f(\su(\nu))=\su(f_*(\nu))$ for all $\nu\in\mathcal{M}(X)$ and that
$f_*(\nu)\in \mathcal{M}_{ex}(X,f)$ for all $\nu\in\mathcal{M}_{ex}(X,f)$ (c.f. \cite{ms}).
This completes the proof.
\qed
\end{proof}

The final ingredient is the following simple fact about dense subsets of $\mathcal{M}(X)$.

\begin{lemma}
\label{l3}
If $X$ is a compact metric space, then $\bigcup_{\nu\in\mathcal{D}}supp(\nu)$ is dense in $X$
for any dense subset $\mathcal{D}$ of $\mathcal{M}(X)$.
\end{lemma}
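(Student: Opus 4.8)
The plan is to show that every nonempty open subset $U$ of $X$ meets the union $\bigcup_{\nu\in\mathcal{D}}\su(\nu)$; since $U$ is arbitrary, this is exactly the asserted density. Fix such a $U$ and a point $x\in U$. By Urysohn's lemma pick a continuous map $\phi: X\to[0,1]$ with $\phi(x)=1$ and with the closure of $\{z\in X:\phi(z)>0\}$ contained in $U$.

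First I would observe that $\mathcal{U}=\{\mu\in\mathcal{M}(X):\int\phi\,d\mu>\tfrac12\}$ is open in the weak* topology: indeed, the map $\mu\mapsto\int\phi\,d\mu$ is continuous by the very definition of that topology. Moreover $\mathcal{U}$ is nonempty, since the Dirac mass $\delta_x$ satisfies $\int\phi\,d\delta_x=\phi(x)=1>\tfrac12$, so $\delta_x\in\mathcal{U}$. Because $\mathcal{D}$ is dense in $\mathcal{M}(X)$, there is therefore some $\nu\in\mathcal{D}\cap\mathcal{U}$.

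Next I would deduce $\su(\nu)\cap U\neq\emptyset$. Since $0\le\phi\le1$ and $\{\phi>0\}\subset U$, we get $\nu(U)\ge\nu(\{\phi>0\})\ge\int\phi\,d\nu>\tfrac12>0$. On the other hand, the complement of the support of any Borel probability measure has measure zero (cover $X\setminus\su(\nu)$ by the countably many members of a countable base of $X$ that have zero $\nu$-measure, using that $X$ is second countable). Hence if $\su(\nu)$ were disjoint from $U$, then $U\subset X\setminus\su(\nu)$ would force $\nu(U)=0$, a contradiction. So $\su(\nu)\cap U\neq\emptyset$, and in particular $U$ meets $\bigcup_{\nu\in\mathcal{D}}\su(\nu)$, completing the argument.

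There is no substantial obstacle here; the only points needing (routine) care are the weak*-continuity of $\mu\mapsto\int\phi\,d\mu$ for continuous $\phi$, which is immediate, and the standard fact that $\nu(X\setminus\su(\nu))=0$, which rests on the separability of the compact metric space $X$.
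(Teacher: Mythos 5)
Your proof is correct and is essentially the same argument as the paper's: both exploit that $\delta_x$ can be weak*-approximated by elements of $\mathcal{D}$, and that a measure sufficiently close to $\delta_x$ must assign positive mass to a given neighborhood of $x$, so its support meets that neighborhood. The only difference is cosmetic --- the paper runs the argument by contradiction and justifies the key step via the portmanteau inequality $\delta_x(O)\leq\liminf_n\mu_n(O)$, whereas you argue directly with a Urysohn bump function and the definition of the weak* topology.
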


\begin{proof}
Otherwise, there would exist $x\in X$ and an open neighborhood $O$ of $x$ such that
$O\cap \su(\mu)=\emptyset$ for every $\nu\in \mathcal{D}$.
Since $\mathcal{D}$ is dense in $\mathcal{M}(X)$, there is a sequence $\mu_n\in\mathcal{D}$ such that
$\mu_n\to \delta_x$ where $\delta_x$ is the Dirac measure supported on $x$.
By the choice of $x$ we have $\mu_n(O)=0$ for all $n\in\mathbb{N}$.
Since $\mu_n\to \delta_x$, we have
$\delta_x(O)\leq\liminf_{n\to\infty}\mu_n(O)=0$
which is absurd.
\qed
\end{proof}

Now we can prove our results.

\begin{proof}[of Theorem \ref{thAA}]
Let $f: X\to X$ be a homeomorphism with expansive measures of a compact metric space $X$.
By Lemma \ref{me} there is a dense subset of expansive measures
whose supports are all equal to $E(f)$.
Since $E(f)$ is invariant by Corollary \ref{c1},
we are done.
\qed
\end{proof}

\begin{proof}[of Theorem \ref{thA}]
Let $f: X\to X$ be a homeomorphism of a compact metric space $X$.
Suppose that $\mathcal{M}_{ex}(X,f)$ is dense in $\mathcal{M}(X)$.
It follows from Lemma \ref{l3} that
$E(f)=X$. Then, Lemma \ref{me} provides a dense subset $\mathcal{R}$ of $\mathcal{M}_{ex}(X,f)$ such that $\su(\mu)=X$
for all $\mu\in\mathcal{R}$.
Since $\mathcal{M}_{ex}(X,f)$ is dense in $\mathcal{M}(X)$, we have that $\mathcal{R}$ is dense in $\mathcal{M}(X)$
and we are done.
\qed
\end{proof}

\begin{acknowledgements}\label{ackref}
The author would like to thank professors A. Arbieto and B. Santiago for helpful conversations.
\end{acknowledgements}

\end{document}